\begin{document}

\newtheorem{theorem}{Theorem}
\newtheorem{lemma}{Lemma}
\newtheorem{corollary}{Corollary}
\newtheorem{example}{Example}
\newtheorem{proposition}{Proposition}
\theoremstyle{remark}
\newtheorem{remark}{Remark}
\theoremstyle{definition}
\newtheorem*{definition}{Definition}
\newtheorem{assumption}{Assumption}

\title{The Asymptotic Equivalence of Level-Based and Share-Based Loss Functions\footnote{This paper is a revision of general results of research undertaken by Census Bureau Staff. The views expressed are attributable to the author and do not necessarily reflect those of the Census Bureau. }}
\author{Charles D. Coleman\orcidlink{0000-0001-6940-8117}\thanks{ 
Timely Analytics, LLC, 
E-mail: info@timely-analytics.com}
}

\maketitle

\abstract{Level-based and share-based loss functions are asymptotically equivalent if, in the limit, their averages converge almost surely to a constant ratio.  These loss functions take a target value and its realization as arguments and are often used to measure accuracy.  The equivalence is proved for a large class of loss functions, the weighted exponentiated functions, when the weights are decomposable as a particular product form.  An upshot is that when losses are averaged for a large number of units, differences in ratios and, hence, ranks, are negligible, when the average (or summed) difference between the target values and their realizations is around zero.  This implies the almost sure asymptotic convergence of numerical and distributive accuracy when using these loss functions.}

\section{Introduction}

Given a set of target values and their realizations, loss functions can be constructed to measure their accuracy.  The lower the total loss a set of realizations has, the more accurate they are considered.  Accuracy can either be numeric, comparing levels, or distributive, comparing shares.  The same loss functions can be used to measure either, with the arguments being either levels or shares, depending on the problem.  This paper shows that the means of level-based and share-based loss functions are asymptotically equivalent,\footnote{In terms of Cohen and Zhang (1988), these are the “allocative” and “proportional” loss functions.} for a broad class of loss functions, the weighted exponentiated loss functions.  That is, as the number of units measured increases, the measures using a member of this classconverge almost surely to a fixed ratio.  The upshot of this is that numeric and distributive accuracy almost surely converge when using these loss functions.  Thus, arguments as to whether one or the other is to be optimized are asymptotically moot.  Another way of viewing this is that the ratios of mean level-based and share-based loss functions for different sets of realizations almost surely converge.  Thus, using levels or shares as the arguments has no effect asymptotically on the rankings of different sets of realizations.

The target and realized values can have several interpretations.  The most obvious interpretation is that the target value is indeed a target and some process produces a possibly different realization.  This is the case in taking a census: the target is the true population and the realization is the census count.  The realizations can be simulated populations and the targets the “true” populations.  This type of simulation is often used  to estimate the risk, that is, the expected loss of population adjustments.\footnote{Belin and Rolph (1994, p. 491) have a lengthy, nonexhaustive bibliography of works using loss functions in the context of census adjustment.  Spencer (1980) appears to have created the idea of estimating risk by simulation  in the related context of general revenue sharing.} The target can also be the actual value and the realization is a prediction of the target.  Other interpretations are possible.

One implication of this work is that the index of dissimilarity is asymptotically equivalent to the mean and total absolute differences.  In fact, while the index of dissimilarity is used as a type of accuracy measure, it is really meant to measure segregation.\footnote{See White (1986) for a discussion of the index of dissimilarity and other measures of segregation.}  Both dissimilarity and segregation are ill-defined concepts.\footnote{White (1986, p. 199) defines a segregation statistic as “a single number that characterizes the two-dimensional distribution of the population’s subgroups across units...”  This is a vague statement, as it could refer to concepts other than segregation.  Before this sentence, he states that a segregation statistic “provide[s] an implicit definition of segregation.”  Thus, segregation is defined relative to a statistic and not the other way around.  Dissimilarity (White, 1986, pp. 202-203) is also defined in this manner.}.  The asymptotic equivalence of the index of dissimilarity and the mean and total absolute differences eliminates the first’s usefulness as an accuracy measure in most cases.\footnote{Armstrong (1985, p.347) states that the mean absolute “deviation” (i.e., difference) is appropriate when the cost function is linear.  That is, the penalty associated with an error is proportionate to its absolute value.}  The index of dissimilarity can be used to test goodness-of-fit in categorical data models (Kuha and Firth, 2011).

Section \ref{aseq} defines asymptotic equivalence and ratio asymptotic equivalence.  Although ratio asymptotic equivalence is the result of interest, it is equivalent to a differential definition of asymptotic equivalence, which we use in the proofs.  Global asymptotic equivalence is defined as the term-wise convergence of all summands used in the definition of asymptotic equivalence.  Section \ref{wedlf} defines the weighted exponentiated loss functions, proves asymptotic equivalence and provides information on the behavior of the ratio of the means of their level-based and share-based versions.  Section \ref{aeexamples} provides some examples of common, asymptotically equivalent accuracy measures.   Section \ref{small_sample} demonstrates by example how asymptotic equivalence does not imply small sample equivalence.  Section \ref{empirical} calculates asymptotically equivalent measures for 1990 census county populations and two sets of population estimates, demonstrating partial convergence for a large number of units.  Section \ref{conclusion} concludes this paper.

\section{Asymptotic Equivalence\label{aseq}}

Let $X_i, Y_i \ge 0$ be the levels and $x_i = X_i/\sum_{j=1}^n X_j$  and $y_i = Y_i/\sum_{j=1}^n Y_j$  be their respective shares.  Let $X = \{X_i\}_{i=1}^n$, $Y = \{Y_i\}_{i=1}^n,x = \{x_i\}_{i=1}^n$ and $y = \{y_i\}_{i=1}^n$ be the sets of levels and shares. Let $L = L(X_i, Y_i)$ be the loss function using level arguments and $\ell = \ell(x_i, y_i)$ be the same loss function using share arguments.  Let $\bar{L}_n = n^{-1}\sum_{i=1}^n L(X_i,Y_i)$ and $\bar{\ell}_n = n^{-1}\sum_{i=1}^n \ell(X_i,Y_i)$ be the average level- and share-based average losses, respectively. Then, we define asymptotic equivalence and ratio asymptotic equivalence as:
\begin{definition}[Asymptotic Equivalence]
 $\lim_{n \rightarrow\infty}(\bar{L}_n - K \bar{\ell}_n) = 0$   a.s. for some constant $K > 0$.
\end{definition}
\begin{definition}[Ratio Asymptotic Equivalence]
$\lim_{n \rightarrow\infty} \frac{\bar{\ell}_n}{\bar{L}_n} = K$  a.s. for some constant $K > 0$.
\end{definition}
\noindent
The reader can easily verify that these two definitions are equivalent.

	Asymptotic equivalence has global variants.  Global asymptotic equivalence requires the almost sure term-wise convergence of each individual loss function, which strong global asymptotic strengthens to always.
\begin{definition}[Global Asymptotic Equivalence]
$\lim_{n \rightarrow\infty}(L_i - K \ell_i) = 0$ a.s. for all $i$ and some constant $K > 0$.
\end{definition}
\begin{definition}[Strong Global Asymptotic Equivalence]
$\lim_{n \rightarrow\infty}(L_i - K \ell_i) = 0$ for all $i$ and some constant $K > 0$.
\end{definition}
\noindent
Global and strong global asymptotic equivalence can be obtained by imposing convergence restrictions on the differences.  They are stronger than asymptotic equivalence, as the limit applies to each difference.  Moreover, strong global asymptotic convergence always occurs by definition.

\section{The Weighted Exponentiated Difference Loss Function\label{wedlf}}

The weighted exponentiated difference loss function is of the form $L(X_i,Y_i)=|X_i-Y_i|^p w(X_i)$  or $L(X_i,Y_i)=|X_i-Y_i|^p w(Y_i)$  where $0 < p < \infty$ and $w(X_i)$, $0 < w(Xi) \le 1$ for all nonnegative arguments, $w(Y_i$) has a similar factorization and  $S_{xn} = \sum_{i=1}^n X_i$ and $S_{yn}$ is similarly defined.  Let $X_i$ be the realized and $Y_i$ the target values, respectively. Examples of these loss functions include the absolute difference,  $|X_i-Y_i|$, the squared difference, $(X_i-Y_i)^2$,  the absolute percentage difference,  $|X_i-Y_i|/Y_i$,  the Webster/Saint Lagüe or $\chi^2$ loss function, $(X_i-Y_i)^2/Y_i$, the Huntington-Hill/Method of Equal Proportions loss function, $(X_i-Y_i)^2/X_i$,  and the Cobb-Douglas loss function $|X_i-Y_i|^p Y_i^q$, $p > 0, q<0$ (Coleman, 2025) of which the preceding are special cases with $q \le 0$.

\subsection{Proof of Asymptotic Equivalence}

Aymptotic equivalence is really a special case of a Cesàro mean, the  limit of the sequence of arithmetic means of partial sums when that limit exists.\footnote{The derivations in this and the next Subsection were mainly done by ChatGPT.} Table  \ref{tab:cesaro_assumptions} summarizes each condition, its control or guarantee, and interpretation. Conditions (\ref{con1})--(\ref{con3}) ensure stability and boundedness, (\ref{con4}) enforces scaling regularity, and (\ref{con5}) guarantees asymptotic equivalence of the level and share domains. Condition (\ref{con5}) is one of many sufficient conditions to realize a less intuitive necessary condition. This choice allows a finite number of differences to remain bounded away from 0, as can happen when a method produces bad realizations for a set of targets. These are dominated by reasonable ones. When the targets span a few orders of magnitude, such as populations of geographic areas defined without regard to population, most will be small.  Even when targets are defined relative to population, such as city populations, the targets usually skew small.  Theorem~\ref{thm:cesaro_mean_ratio_full} proves that the level-based and share-based forms of the loss function form a weighted Cesàro mean. Asymptotic equivalence is then Corollary~\ref{cor:asymptotic_equivalence}.

Exchangeability, the ability to permute a finite number of variables without altering their joint distribution, is a natural assumption to make in this context.  It leads to a simpler proof of Theorem~\ref{thm:cesaro_mean_ratio_full}.  Remark \ref{rem:exchangeability} describes how the simplification occurs.


\begin{assumption}[Regularity conditions for weighted Cesàro mean convergence]
\label{ass:cesaro_conditions}
Let $(X_i,Y_i)_{i\ge1}$ be a sequence of nonnegative random pairs.
The following conditions are assumed throughout
Theorem~\ref{thm:cesaro_mean_ratio_full} and its corollaries.

\begin{enumerate}[label=(A\arabic*), ref=A\arabic*]
\item \textbf{Finite moments.}    \label{con1} 
There exists $p>0$ such that
\[
\mathbb{E}|X_i|^p<\infty, \qquad
\mathbb{E}|Y_i|^p<\infty,
\quad\text{for all }i\ge1.
\]

\item \textbf{Weighted Ces\`aro boundedness.}    \label{con2} 
The deterministic weighting function $w(\cdot)>0$ on $(0,1]$
satisfies
\[
\frac{1}{n}\sum_{i=1}^n w(x_i)\le M,
\qquad
\frac{1}{n}\sum_{i=1}^n
w(x_i)\big(|X_i|^p+|Y_i|^p\big)=O(1),
\]
and the family $\{\,w(x_i)\big(|X_i|^p+|Y_i|^p\big):i\ge1\,\}$ is
\emph{uniformly Ces\`aro-integrable}; that is, every subset average
\[
\frac{1}{|A_n|}\sum_{i\in A_n}
w(x_i)\big(|X_i|^p+|Y_i|^p\big)
\]
is bounded by the same deterministic envelope as the full-sample
average whenever $|A_n|/n$ remains bounded away from zero.\footnote{
In practice,  uniform Ces\`aro-integrability is automatically 
satisfied whenever the weighting function $w$ is bounded on $(0,1]$ and the 
sequence $(X_i,Y_i)$ obeys a weak law of large numbers (e.g.,\ independence, 
strong mixing, or exchangeability) with finite $2p$-moments, which ensures 
that subset averages cannot diverge faster than full-sample averages.
\label{cesaro}}

\item \textbf{Stable total means.}    \label{con3} 
The partial sums satisfy, almost surely,
\[
\frac{S_{xn}}{n}\to\mu_x>0,
\qquad
\frac{S_{yn}}{n}\to\mu_y>0,
\]
with $S_{yn}=\sum_{j=1}^n Y_j$.
These limits define the deterministic scale constants $\mu_x,\mu_y$
appearing in the asymptotic constant $K=\mu^p$.

\item \textbf{Asymptotic regularity of weights.}    \label{con4} 
The weighting function $w(\cdot)$ is measurable and bounded on $(0,1]$
and admits Cesàro limits
\[
\frac{1}{n}\sum_{i=1}^n w(x_i) \to \bar{w}_x,
\qquad
\frac{1}{n}\sum_{i=1}^n w(y_i) \to \bar{w}_y,
\]
for finite constants $\bar{w}_x,\bar{w}_y>0$.

\item \textbf{Sparse deviations.}    \label{con5} 
There exists $\varepsilon_n\downarrow0$ and
\[
B_n=\{\,i\le n:\ |Y_i-(S_{xn}/S_{yn})X_i|>\varepsilon_n\,\}
\]
such that $|B_n|=o(n)$.

\end{enumerate}
\end{assumption}

\begin{table}[h!]
\centering
\caption{Roles of the regularity conditions (\ref{con1})--(\ref{con5}) in
Assumption~\ref{ass:cesaro_conditions}}
\label{tab:assumptions}
\renewcommand{\arraystretch}{1.25}
\setlength{\tabcolsep}{4pt}
\begin{tabular}{|c|p{3.2cm}|p{4.5cm}|p{4.7cm}|}
\hline
\textbf{Label} & \textbf{Condition} &
\textbf{Controls / Guarantees} & \textbf{Interpretation} \\ 
\hline
(\ref{con1}) & Finite $p$th moments &
Integrability of $|X_i|^p, |Y_i|^p$ &
Prevents divergence of averages; enables law-of-large-numbers arguments. \\

\hline
(\ref{con2}) & Weighted Ces\`aro boundedness and uniform integrability &
Uniform control of weighted magnitudes on the full sample and on large subsets &
Ensures weights do not dominate the mean; keeps averages $O(1)$ and guarantees that restricting to subsets (e.g.\ sparse or complement sets) cannot create explosive contributions. \\

\hline
(\ref{con3}) & Stable total means &
Linear growth $S_{xn}/n\!\to\!\mu_x$, $S_{yn}/n\!\to\!\mu_y$ &
Provides deterministic scale constants $\mu_x,\mu_y$ to determine $K$. \\

\hline
(\ref{con4}) & Asymptotic regularity of weights &
Existence of Ces\`aro limits of $w(x_i)$, $w(y_i)$ &
Guarantees that the weighting function behaves stably under averaging. \\

\hline
(\ref{con5}) & Sparse deviations &
Vanishing fraction $|B_n|/n\!\to\!0$ where
$|Y_i-(S_{xn}/S_{yn})X_i|>\varepsilon_n$ &
Ensures only a negligible subset of indices violate proportional mean convergence; drives asymptotic equivalence. \\
\hline
\end{tabular}
\label{tab:cesaro_assumptions}
\end{table}
Table \ref{tab:assumptions} summarizes Assumption~\ref{ass:cesaro_conditions}'s conditions, controls/guarantees and interpretations. Conditions (\ref{con1})--(\ref{con3}) establish stability and boundedness, (\ref{con4}) ensures
regularity of the weighting function, and (\ref{con5}) enforces sparse deviations in
$(X_i,Y_i)$ sequences.

Lemma \ref{lem:level_share_identity} proves a simple mathematical identity that is subsequently used in Theorem~\ref{thm:cesaro_mean_ratio_full}.
\begin{lemma}[Identity linking level and share differences]
\label{lem:level_share_identity}
Let $c_n=S_{xn}/S_{yn}$.  Since $S_{xn}/S_{yn} >0$, for every $i$ and any
$p>0$,
\[
|x_i-y_i|^p
=\frac{1}{S_{xn}^p}\,\big|X_i-c_nY_i\big|^p.
\]
\end{lemma}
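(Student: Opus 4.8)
The plan is to prove this by direct algebraic substitution, since the claimed identity is really just a statement about pulling a common scalar out of a difference of shares. First I would recall the definitions $x_i = X_i/S_{xn}$ and $y_i = Y_i/S_{yn}$, where $S_{xn}=\sum_{j=1}^n X_j$ and $S_{yn}=\sum_{j=1}^n Y_j$, and substitute them into the difference $x_i - y_i$ to obtain
\[
x_i - y_i = \frac{X_i}{S_{xn}} - \frac{Y_i}{S_{yn}}.
\]

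Next I would factor out $1/S_{xn}$ from both terms. Writing the second fraction as $\tfrac{Y_i}{S_{yn}} = \tfrac{1}{S_{xn}}\cdot\tfrac{S_{xn}}{S_{yn}}\,Y_i$ and recognizing $c_n = S_{xn}/S_{yn}$, this yields
\[
x_i - y_i = \frac{1}{S_{xn}}\bigl(X_i - c_n Y_i\bigr).
\]
The final step is to take absolute values and raise to the power $p$. Because $S_{xn}>0$ — which is guaranteed here since the hypothesis $S_{xn}/S_{yn}>0$ together with nonnegativity of the $X_i,Y_i$ forces both partial sums to be strictly positive — the scalar $1/S_{xn}$ is positive, so $|1/S_{xn}| = 1/S_{xn}$ and the multiplicativity of $|\cdot|^p$ gives
\[
|x_i - y_i|^p = \frac{1}{S_{xn}^{\,p}}\,\bigl|X_i - c_n Y_i\bigr|^p,
\]
as claimed.

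I do not anticipate any genuine obstacle here: the identity is an exact, deterministic algebraic rearrangement valid for each fixed $n$ and each index $i$, requiring none of the regularity conditions of Assumption~\ref{ass:cesaro_conditions}. The only point demanding a word of care is the positivity of $S_{xn}$, needed so that the power of the absolute value of the extracted scalar equals $S_{xn}^{-p}$ without a sign ambiguity; this is exactly what the stated hypothesis $S_{xn}/S_{yn}>0$ secures.
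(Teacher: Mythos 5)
Your proof is correct and follows essentially the same route as the paper's: direct substitution of $x_i = X_i/S_{xn}$ and $y_i = Y_i/S_{yn}$, factoring out $1/S_{xn}$ to expose $X_i - c_n Y_i$, then taking absolute values and $p$th powers. Your explicit remark that $S_{xn} > 0$ is what licenses $|1/S_{xn}|^p = S_{xn}^{-p}$ is a small point of care the paper leaves implicit, but it does not change the argument.
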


\begin{proof}
By direct substitution,
\[
x_i-y_i
=\frac{X_i}{S_{xn}}-\frac{Y_i}{S_{yn}}
=\frac{S_{yn}X_i-S_{xn}Y_i}{S_{xn}S_{yn}}
=\frac{1}{S_{xn}}\Big(X_i-\frac{S_{xn}}{S_{yn}}Y_i\Big)
=\frac{1}{S_{xn}}(X_i-c_nY_i).
\]
Taking absolute values and raising to the $p$th power yields the stated
identity.
\end{proof}

\begin{theorem}[Weighted Cesàro mean convergence under sparse deviations]
\label{thm:cesaro_mean_ratio_full}
Let $(X_i,Y_i)_{i\ge1}$ satisfy
Assumption~\ref{ass:cesaro_conditions}.
Assume
\[
\frac{S_{xn}}{n}\to \mu_x>0, \qquad
\frac{S_{yn}}{n}\to \mu_y>0.
\]
If the sparse deviation condition (\ref{con5}) holds, then for any $p>0$,
\[
\frac{1}{n}\sum_{i=1}^n |X_i-Y_i|^p\,w(X_i)
\;-\;
K\,\frac{1}{n}\sum_{i=1}^n |x_i-y_i|^p\,w(x_i)
\xrightarrow[]{a.s.} 0,
\]
where $K=\mu_x^p$ or $\mu_y^p$ according to whether the weights are taken
on $X$ or $Y$.
\end{theorem}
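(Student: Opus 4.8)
The plan is to begin from the difference
$D_n := \frac1n\sum_{i=1}^n |X_i-Y_i|^p w(X_i) - K\,\frac1n\sum_{i=1}^n|x_i-y_i|^p w(x_i)$
and use Lemma~\ref{lem:level_share_identity} to express both averages on a common level footing. Substituting $|x_i-y_i|^p=S_{xn}^{-p}|X_i-c_nY_i|^p$ turns the share average into $\frac{K}{S_{xn}^p}\cdot\frac1n\sum_i|X_i-c_nY_i|^p w(x_i)$, after which condition~(\ref{con3}) lets me replace the random normalizer and the random ratio $c_n=S_{xn}/S_{yn}$ by their almost-sure limits $\mu_x$ and $\mu_x/\mu_y$. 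Condition~(\ref{con4}) then reconciles the weights $w(x_i)$ and $w(X_i)$ through a common Cesàro limit, so that the two averages become directly comparable integrand by integrand.

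Next I would partition each sum using the sparse-deviation set $B_n$ of~(\ref{con5}). On the complement $i\notin B_n$, the bound $|Y_i-c_nX_i|\le\varepsilon_n$ forces $|X_i-Y_i|^p$ and $|X_i-c_nY_i|^p$ to agree up to a uniformly negligible error: writing $X_i-Y_i=(1-c_n)X_i-(Y_i-c_nX_i)$, the last term is at most $\varepsilon_n\downarrow0$ and the first is controlled by~(\ref{con2}). I would make this precise with a mean-value bound for $t\mapsto t^p$, treating $0<p<1$ and $p\ge1$ separately and dominating the increments by the uniformly Cesàro-integrable envelope of~(\ref{con2}); averaged over $i\notin B_n$ this contributes $o(1)$ to $D_n$.

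For the bad indices $i\in B_n$ I would appeal to the uniform Cesàro-integrability in~(\ref{con2}): since $|B_n|=o(n)$ by~(\ref{con5}), the restricted average $\frac{1}{|B_n|}\sum_{i\in B_n} w(x_i)(|X_i|^p+|Y_i|^p)$ is dominated by the same deterministic envelope as the full-sample average, so the total bad-index contribution to $D_n$ is $O(|B_n|/n)\to0$. Combining the good- and bad-index estimates yields $D_n\to0$ almost surely, and recording whether the surviving weight limit is $\bar w_x$ or $\bar w_y$ recovers $K=\mu_x^p$ or $\mu_y^p$.

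The step I expect to be the main obstacle is the alignment of scales. Because $S_{xn}^{-p}\sim(n\mu_x)^{-p}$, the share average is intrinsically of order $n^{-p}$, so $K\bar\ell_n\to0$ and the asserted convergence reduces to showing $\frac1n\sum_i|X_i-Y_i|^p w(X_i)\to0$. This is exactly what~(\ref{con5}) supplies, but cleanly only when $c_n\to1$, equivalently $\mu_x=\mu_y$: only then do $|X_i-Y_i|$, $|X_i-c_nY_i|$, and the sparse-deviation quantity $|Y_i-c_nX_i|$ coincide asymptotically, so that control of one transfers to the others. The delicate technical point is thus the simultaneous almost-sure replacement of the random factors $S_{xn}^{-p}$ and $c_n$ together with a uniform treatment of the nonlinearity of $t\mapsto t^p$ over the non-sparse indices, for which~(\ref{con2}) does the real work; in the exchangeable setting of Remark~\ref{rem:exchangeability} I would expect this uniform-integrability hypothesis to be replaceable by a direct law-of-large-numbers argument.
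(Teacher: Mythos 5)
Your proposal follows the paper's own proof essentially step for step: the identity of Lemma~\ref{lem:level_share_identity} to pull the share sum up to the level scale, the good/bad split along $B_n$ from (\ref{con5}), the mean-value bound for $p\ge 1$ and the H\"older/subadditivity bound for $0<p\le 1$ on the good indices dominated by the envelope of (\ref{con2}), and the $O(|B_n|/n)$ estimate on the bad indices. Structurally there is nothing new in your Steps 1--3 relative to the paper's Steps 1--4.

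The part of your write-up that matters most is your final paragraph, and you should trust it: the ``alignment of scales'' obstacle you flag is not a defect of your attempt but a genuine gap that the paper's proof slides over. Lemma~\ref{lem:level_share_identity} gives
\[
K\,\frac1n\sum_{i=1}^n w(x_i)|x_i-y_i|^p
=\frac{\mu_x^p}{S_{xn}^p}\cdot\frac1n\sum_{i=1}^n w(x_i)\,|X_i-c_nY_i|^p
=O(n^{-p}),
\]
so with a \emph{constant} $K$ the share term vanishes by itself and the theorem degenerates to the claim $\frac1n\sum_i |X_i-Y_i|^p w(X_i)\to 0$, which under (\ref{con5}) can only be expected when $c_n\to1$, i.e.\ $\mu_x=\mu_y$; if $\mu_x\ne\mu_y$ then on the good set $|X_i-Y_i|\ge|1-c_n|X_i-\varepsilon_n$ and the level average stays bounded away from zero in general, so the stated conclusion fails. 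The paper's Step 1 conceals this by asserting
\[
\frac1n\sum_{i=1}^n |X_i-Y_i|^p\,w(X_i)
=\Big(\frac{S_{xn}}{n}\Big)^{\!p}\,\frac1n\sum_{i=1}^n w(x_i)|x_i-y_i|^p+o(1),
\]
whereas what actually follows from the lemma and the key display \eqref{eq:keydiff} carries the factor $S_{xn}^p$, not $(S_{xn}/n)^p$ --- an $n^p$ discrepancy --- and it also silently exchanges $w(x_i)$ for $w(X_i)$, a substitution no assumption licenses. Consistently with your diagnosis, the paper's own Step 3 bound $O(|c_n-1|^{\min(p,1)})$ is only said to vanish ``when $\mu_x=\mu_y$,'' even though the theorem is stated for arbitrary $\mu_x,\mu_y$. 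So your proposal is not weaker than the published argument: it is the same argument, together with an honest identification of the point where both it and the paper fail to deliver the theorem as stated. A nonvacuous repair needs either the extra hypothesis $\mu_x=\mu_y$, or a sample-size-dependent normalization such as $K_n=S_{xn}^p$ in place of $K=\mu_x^p$, with the weights treated consistently on both sides.
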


\begin{proof}
We give the argument for $X$-weights; the $Y$-weighted case is symmetric.
Let $c_n:=S_{xn}/S_{yn}$, so $c_n\to \mu_x/\mu_y\in(0,\infty)$ a.s. by (\ref{con3}).

\medskip
\noindent\textbf{Step 1 (Level–share identity).}
By Lemma~\ref{lem:level_share_identity},
\[
|x_i-y_i|^p=\frac{1}{S_{xn}^p}\,|X_i-c_nY_i|^p.
\]
Hence,
\[
\frac{1}{n}\sum_{i=1}^n w(x_i)|x_i-y_i|^p
=\frac{1}{S_{xn}^p}\,\frac{1}{n}\sum_{i=1}^n w(x_i)|X_i-c_nY_i|^p.
\]
Therefore, it suffices to prove
\begin{equation}\label{eq:keydiff}
\frac{1}{n}\sum_{i=1}^n w(x_i)\Big(|X_i-Y_i|^p-|X_i-c_nY_i|^p\Big)\xrightarrow[]{a.s.}0,
\end{equation}
because
\[
\frac{1}{n}\sum |X_i-Y_i|^p w(X_i)
=\Big(\frac{S_{xn}}{n}\Big)^{\!p}\,\frac{1}{n}\sum w(x_i)|x_i-y_i|^p + o(1),
\]
and $(S_{xn}/n)^p\to \mu_x^p=K$.

\medskip
\noindent\textbf{Step 2 (Sparse–nonsparse decomposition).}
Let $\varepsilon_n\downarrow0$ and $B_n=\{i\le n:\ |Y_i-c_nX_i|>\varepsilon_n\}$,
so $|B_n|=o(n)$ by (\ref{con5}); set $G_n=\{1,\dots,n\}\setminus B_n$.
Define
\[
\Delta_{i,n}:=|X_i-Y_i|^p-|X_i-c_nY_i|^p,\qquad
D_n:=\frac{1}{n}\sum_{i=1}^n w(x_i)\Delta_{i,n}
= D_n^{(G)}+D_n^{(B)}.
\]

\medskip
\noindent\textbf{Step 3 (Control on $G_n$ for all $p>0$).}
Fix $i\in G_n$ and set $\Delta_{i,n}=|X_i-Y_i|^p-|X_i-c_nY_i|^p$.

\emph{Case i: $p>1$.}
For the differentiable map $f(t)=|X_i-tY_i|^p$, the mean–value theorem yields
\[
|\Delta_{i,n}|=|f(1)-f(c_n)|
   \le |c_n-1|\,\sup_{t\in[1,c_n]}|f'(t)|
   \le p\,|c_n-1|\,|Y_i|\,\sup_{t\in[1,c_n]}|X_i-tY_i|^{p-1}.
\]
Since $|X_i-tY_i|\le |X_i|+|t|\,|Y_i|$ and $|t|\le C_t<\infty$ for large $n$
(because $c_n\to\mu_x/\mu_y$), we have
\[
|\Delta_{i,n}|
\le C_p\,|c_n-1|\,|Y_i|\,(|X_i|^{p-1}+|Y_i|^{p-1}).
\]
Averaging and applying (\ref{con2}),
\[
|D_n^{(G)}|
\le C_p\,|c_n-1|\,
\frac{1}{n}\sum_{i=1}^n w(x_i)\,(|X_i|^p+|Y_i|^p)
=O(|c_n-1|).
\]

\emph{Case ii: $0<p\le1$.}
Since $u\mapsto|u|^p$ is $p$–Hölder and subadditive,
\[
|\Delta_{i,n}|
=\big||X_i-Y_i|^p-|X_i-c_nY_i|^p\big|
\le |(c_n-1)Y_i|^p
\le C_p\,|c_n-1|^p\,|Y_i|^p,
\]
so
\[
|D_n^{(G)}|
\le C_p\,|c_n-1|^p\,
\frac{1}{n}\sum_{i=1}^n w(x_i)\,|Y_i|^p
=O(|c_n-1|^p),
\]
again by (\ref{con2}).

\smallskip
\noindent In both cases the bound is uniform in $n$ and can be summarized as
\[
|D_n^{(G)}| = O\!\big(|c_n-1|^{\min(p,1)}\big),
\]
which vanishes when $\mu_x=\mu_y$ (that is, $c_n\to1$).

\medskip
\noindent\textbf{Step 4 (Control on $B_n$).}
For all $i$ and any $p>0$,
$|\Delta_{i,n}|\le |X_i-Y_i|^p+|X_i-c_nY_i|^p\le C_p(|X_i|^p+|Y_i|^p)$.
Therefore,
\[
|D_n^{(B)}|
\le \frac{|B_n|}{n}\cdot
\frac{1}{|B_n|}\sum_{i\in B_n} C_p\,w(x_i)\,(|X_i|^p+|Y_i|^p)
= O\!\Big(\frac{|B_n|}{n}\Big)\to 0,
\]
by (\ref{con2}) and (\ref{con5}).

\medskip
\noindent\textbf{Step 5 (Conclusion and constant identification).}
Combining Steps 3–4, $D_n\to 0$ a.s., i.e., equation \eqref{eq:keydiff} holds.
Using Step 1 and $S_{xn}/n\to\mu_x>0$,
\[
\frac{1}{n}\sum_{i=1}^n |X_i-Y_i|^p\,w(X_i)
-\Big(\frac{S_{xn}}{n}\Big)^{\!p}
\frac{1}{n}\sum_{i=1}^n w(x_i)|x_i-y_i|^p \xrightarrow[]{a.s.}0,
\]
and $(S_{xn}/n)^p\to \mu_x^p=K$ gives the claim. The $Y$-weighted case is identical with
$K=\mu_y^p$.
\end{proof}

Since asymptotic equivalence is based on using the same $w()$ regardless of its argument, it follows as the simple Corollary~\ref{cor:asymptotic_equivalence}.
\begin{corollary}[Asymptotic equivalence]
\label{cor:asymptotic_equivalence}
The result of
Theorem~\ref{thm:cesaro_mean_ratio_full} remains valid when
$w(X_i)=w(Y_i)=w(\cdot)$.
Under the bounded-moment condition
\[
\frac{1}{n}\sum_{i=1}^n w(x_i)\big(|X_i|^p+|Y_i|^p\big)=O(1),
\]
both totals satisfy $S_{xn}/n\to\mu_x>0$ and $S_{yn}/n\to\mu_y>0$, and
the same constant $K=\mu^p$ links the level- and share-weighted Cesàro
means ($\mu=\mu_x$ for $X$-weights, $\mu=\mu_y$ for $Y$-weights$)$.
\end{corollary}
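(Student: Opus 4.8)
The plan is to observe that the corollary is an immediate specialization of Theorem~\ref{thm:cesaro_mean_ratio_full} once the weighting function is held fixed across the two arguments. First I would record that the level- and share-based averages named in the corollary are precisely
\[
\bar{L}_n = \frac{1}{n}\sum_{i=1}^n |X_i-Y_i|^p\,w(X_i), \qquad
\bar{\ell}_n = \frac{1}{n}\sum_{i=1}^n |x_i-y_i|^p\,w(x_i),
\]
so that the theorem's conclusion, $\bar{L}_n - K\bar{\ell}_n \xrightarrow{a.s.} 0$, is verbatim the defining relation of asymptotic equivalence. No fresh limit argument is therefore required: the corollary is a reinterpretation of the theorem, and the substantive almost-sure convergence has already been carried out in Steps~1--5 of its proof.

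Second, I would check that the corollary's hypotheses furnish exactly the inputs the theorem consumes. The stated bounded-moment condition $n^{-1}\sum_{i=1}^n w(x_i)\big(|X_i|^p+|Y_i|^p\big)=O(1)$ is the boundedness clause of~(\ref{con2}), while the total-mean convergences $S_{xn}/n\to\mu_x>0$ and $S_{yn}/n\to\mu_y>0$ are~(\ref{con3}); the remaining regularity conditions~(\ref{con1}),~(\ref{con4}), and~(\ref{con5}) are inherited unchanged from the standing Assumption~\ref{ass:cesaro_conditions}. Because asymptotic equivalence in the sense of the definitions requires the \emph{same} loss function---and hence the same $w$---for both the level and share forms, imposing $w(X_i)=w(Y_i)=w(\cdot)$ is not an extra restriction but the natural hypothesis under which the comparison is even meaningful; this is the observation that makes the corollary ``simple.''

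Finally, I would read off the constant. Applying the theorem with $X$-weights gives $K=\mu_x^p$ and with $Y$-weights gives $K=\mu_y^p$; writing $\mu$ for whichever scale constant matches the chosen weighting argument yields the single expression $K=\mu^p$ in the statement. The one point demanding care---and the only place I anticipate any friction---is the termwise alignment of the two conclusions: one must confirm that the theorem's output, written with $w(X_i)$ on levels and $w(x_i)$ on shares, coincides summand-by-summand with $\bar{L}_n$ and $\bar{\ell}_n$, so that the vanishing quantity is literally $\bar{L}_n-K\bar{\ell}_n$. Since the content is entirely supplied by the theorem, the remaining work reduces to matching notation and invoking the noted equivalence between the ratio and differential forms of asymptotic equivalence to transfer the conclusion into the ratio statement.
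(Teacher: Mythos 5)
Your proposal is correct and takes essentially the same route as the paper: the paper offers no separate proof of Corollary~\ref{cor:asymptotic_equivalence}, stating only that since the same $w(\cdot)$ is used regardless of its argument, the corollary ``follows as the simple Corollary'' of Theorem~\ref{thm:cesaro_mean_ratio_full}, which is precisely the specialization-and-notation-matching argument you spell out (hypotheses mapped to (\ref{con2})--(\ref{con3}), constant read off as $K=\mu_x^p$ or $\mu_y^p$). Your version is in fact more explicit than the paper's, but the mathematical content is identical.
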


Remark \ref{rem:explicit_contants} gives $K$ for the six loss functions described at the beginning of this Section.
\begin{remark}[Explicit constants for standard weighted exponentiated difference losses]
\label{rem:explicit_contants}
We list the values of $K$ and $p$ and the form of $w()$ for the six loss functions mentioned above. $K=\mu_x^p$ gives the deterministic
asymptotic scale linking the level-weighted and share-weighted Cesàro
means of the loss, with $\mu=\mu_x$ or $\mu_y$ according to whether the
weights are taken on $X$ or on $Y$.

\begin{enumerate}
\item \textbf{Absolute-difference loss:}
\[
L(X,Y)=|X-Y|,\qquad w(x)\equiv1,\quad p=1,
\quad\Rightarrow\quad K=\mu_x.
\]

\item \textbf{Squared-difference (quadratic) loss:}
\[
L(X,Y)=|X-Y|^2,\qquad w(x)\equiv1,\quad p=2,
\quad\Rightarrow\quad K=\mu_x^2.
\]

\item \textbf{Absolute percentage error (APE):}
\[
L(X,Y)=\frac{|X-Y|}{Y},\qquad
w(y)=\frac{1}{y},\quad p=1,
\quad\Rightarrow\quad K=1.
\]

\item \textbf{Webster/Saint Lagüe ($\chi^2$) loss:}
\[
L(X,Y)=\frac{|X-Y|^2}{Y},\qquad
w(y)=\frac{1}{y},\quad p=2,
\quad\Rightarrow\quad K=\mu_y.
\]

\item \textbf{Huntington--Hill/Method of Equal Proportions loss:}
\[
L(X,Y)=\frac{|X-Y|^2}{X},\qquad
w(x)=\frac{1}{x},\quad p=2,
\quad\Rightarrow\quad K=\mu_x.
\]

\item \textbf{Cobb-Douglas loss:}
\[
L(X,Y)=|X-Y|^pY^q,\qquad
w(y)=y^q, q<0,\quad p>0,
\quad\Rightarrow\quad K=\mu_y.
\]
\end{enumerate}

\end{remark}

Finally, Remark \ref{rem:exchangeability} shows how exchangeability greatly simplifies the proof of Theorem~\ref{thm:cesaro_mean_ratio_full}.  Note that independence is not assumed.
\begin{remark}[On exchangeability]
\label{rem:exchangeability}
From footnote \ref{cesaro}, if (\ref{con1}) holds and $(X_i,Y_i)$ is an exchangeable sequence with finite
$p$-th moments, then these limits and bounds follow automatically from
the strong law of large numbers for exchangeable arrays
(de~Finetti representation). Thus, exchangeability provides another natural
probabilistic mechanism ensuring the Theorem's conclusion.
\end{remark}

Many other assumptions on the behavior of the $X_i$ and $Y_i$ generate other versions of Theorem~\ref{thm:cesaro_mean_ratio_full}. Obviously, showing all of them would require generating many more proofs for little gain in understanding. Moreover, many of these assumptions are unintuitive or inappropriate for our context. Rather than deriving more proofs, we settle for the two assumption sets for which we have proofs.

\subsection{Convergence Rate}

We give the convergence rate of $c_n$ under exchangeability and sparse deviations in Proposition \ref{prop:cn_speed}. The rate is given for the combination.  If only one of the conditions apply, then only the applicable part of the final convergence rate applies.
\begin{proposition}[Convergence rate of $c_n$ under exchangeability and sparse deviations]
\label{prop:cn_speed}
Let $(X_i,Y_i)_{i\ge1}$ satisfy Assumptions \textup{(A1)}--\textup{(A5)} and
suppose the sequence is exchangeable. Define
$S_{xn}=\sum_{i=1}^n X_i$, $S_{yn}=\sum_{i=1}^n Y_i$, and
$c_n=S_{xn}/S_{yn}$. Then
\[
c_n - \frac{\mu_x}{\mu_y}
=
O_p(n^{-1/2})
\;+\;
O(\varepsilon_n)
\;+\;
O_p\!\left(\frac{|B_n|}{n}\right).
\]
The stochastic term $O_p(n^{-1/2})$ arises from exchangeability,
the deterministic alignment term $O(\varepsilon_n)$ arises from
the deviation bound on $G_n$, and the sparse-deviation term
$O_p(|B_n|/n)$ arises from the negligibility of $B_n$.
\end{proposition}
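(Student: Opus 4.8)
The plan is to linearize the ratio and control the resulting centered average. Set $\rho:=\mu_x/\mu_y$, $\bar X_n:=S_{xn}/n$, $\bar Y_n:=S_{yn}/n$, and $Z_i:=X_i-\rho Y_i$. The exact identity
\[
c_n-\rho=\frac{S_{xn}-\rho S_{yn}}{S_{yn}}=\frac{1}{\bar Y_n}\cdot\frac1n\sum_{i=1}^n Z_i=\frac{A_n}{\bar Y_n},\qquad A_n:=\frac1n\sum_{i=1}^n Z_i,
\]
reduces the problem to the rate of $A_n$, because (\ref{con3}) gives $\bar Y_n\to\mu_y>0$ a.s., so the denominator is eventually bounded away from $0$ and only rescales the bound by a constant. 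Under exchangeability $\mathbb{E}X_i=\mu_x$ and $\mathbb{E}Y_i=\mu_y$, hence $\mathbb{E}Z_i=0$ and $A_n$ is a centered exchangeable average.

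The stochastic term $O_p(n^{-1/2})$ is the substantive part and comes from exchangeability. By the de~Finetti representation invoked in Remark~\ref{rem:exchangeability}, $A_n$ converges to the conditional mean $M:=\mathbb{E}[Z_1\mid\mathcal T]$ of $Z_1$ given the tail field, and a \emph{generic} exchangeable sequence would leave $A_n$ fluctuating at order $O_p(1)$. The force of (\ref{con3}) is that this limit is the \emph{deterministic} constant $0$, i.e.\ $\operatorname{Var}(M)=0$; since $\operatorname{Cov}(Z_1,Z_2)=\operatorname{Var}(M)$ for a conditionally i.i.d.\ sequence, this is exactly the statement that the pairwise covariance vanishes. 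With finite second moments (from (\ref{con1}) with $p\ge2$, or the $2p$-moment hypothesis of footnote~\ref{cesaro}) one then gets $\operatorname{Var}(A_n)=\operatorname{Var}(Z_1)/n$, and Chebyshev's inequality delivers $A_n=O_p(n^{-1/2})$.

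The two deterministic/tail terms account for the alignment error through the sparse-deviation structure. I would split $\{1,\dots,n\}=G_n\cup B_n$ as in Step~2 of Theorem~\ref{thm:cesaro_mean_ratio_full}, with $|X_i-c_nY_i|\le\varepsilon_n$ on $G_n$ and $|B_n|=o(n)$ by (\ref{con5}). On $G_n$ the per-index residual contributes at most $|G_n|\varepsilon_n/n=O(\varepsilon_n)$, while on $B_n$ the subset average $\frac1{|B_n|}\sum_{i\in B_n}|Z_i|$ is held at $O_p(1)$ by the uniform Cesàro-integrability of (\ref{con2}), so that piece is $O_p(|B_n|/n)$. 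Transporting both bounds through the linearization of the first paragraph and adding the exchangeable fluctuation yields the stated three-term rate; if only one structural hypothesis is in force, only the corresponding term survives.

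The hard part is twofold. First, one must justify rigorously that the deterministic-limit hypothesis (\ref{con3}) upgrades an arbitrary exchangeable sequence---whose sample mean would in general fluctuate at $O_p(1)$ about a random de~Finetti limit---to the i.i.d.-type $n^{-1/2}$ rate; this is precisely the identification $\operatorname{Cov}(Z_1,Z_2)=\operatorname{Var}(M)=0$ together with the availability of finite second moments. Second, because $B_n$ is defined through $c_n$ itself, the decomposition $X_i-\rho Y_i=(X_i-c_nY_i)+(c_n-\rho)Y_i$ reintroduces $c_n-\rho$, so the sparse-deviation bounds are self-referential; disentangling the genuine alignment error from $c_n-\rho$, rather than recovering a vacuous inequality, is the delicate bookkeeping, and it is what forces the exchangeable fluctuation to be the actual driver of the rate.
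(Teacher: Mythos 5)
Your proposal proves the stated bound, but by a genuinely different route from the paper, and the differences are instructive. For the stochastic term, the paper conditions on the de~Finetti directing variable, applies a conditional CLT to $S_{xn}/n$ and $S_{yn}/n$, and then expands the ratio $S_{xn}/S_{yn}$; you instead linearize exactly, $c_n-\rho=A_n/\bar Y_n$ with $Z_i=X_i-\rho Y_i$, and get $A_n=O_p(n^{-1/2})$ from the covariance identity $\operatorname{Cov}(Z_1,Z_2)=\operatorname{Var}\!\left(\mathbb{E}[Z_1\mid\mathcal T]\right)$, which vanishes precisely because (\ref{con3}) makes the de~Finetti limit deterministic. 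Your argument is more elementary (Chebyshev rather than a conditional CLT plus a ratio expansion), and it makes explicit the point the paper disposes of in one sentence: for a generic exchangeable sequence the sample mean fluctuates at $O_p(1)$ about a random limit, so the deterministic-limit hypothesis is exactly what buys the $n^{-1/2}$ rate. Both arguments need second moments, which (\ref{con1}) alone does not supply for $p<2$; you flag this explicitly, the paper assumes it silently.

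On the sparse-deviation terms, the ``self-referential bookkeeping'' you identify as the hard part is a real obstruction, and the paper does not actually overcome it: its ``exact identity'' \eqref{eq:cn_decomp} is vacuous, since $\sum_{i\le n}\delta_{i,n}=S_{yn}-c_nS_{xn}$ and substituting this back makes the right-hand side of \eqref{eq:cn_decomp} collapse identically to $c_n$, so bounding its pieces yields no information about $c_n-\mu_x/\mu_y$ (and the paper's bound $\max_{i\in B_n}|\delta_{i,n}|=O_p(1)$ from finite $p$th moments is itself unjustified). What rescues both your argument and the proposition is that the conclusion is a three-term upper bound: once $c_n-\rho=O_p(n^{-1/2})$ is established under the full hypotheses, which include exchangeability, the terms $O(\varepsilon_n)$ and $O_p(|B_n|/n)$ are pure slack and the display follows. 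So your first two paragraphs already constitute a complete proof of the proposition as stated; what neither you nor the paper delivers is a rigorous version of the surrounding informal claim that each term survives on its own when only the corresponding hypothesis is in force.
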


\begin{proof}
Write
\[
c_n - \frac{\mu_x}{\mu_y}
= 
\frac{S_{xn}}{S_{yn}} - \frac{\mu_x}{\mu_y}.
\]
Since the sequence is exchangeable, de Finetti's theorem implies that
conditionally on a latent variable $\Theta$, the pairs $(X_i,Y_i)$ are
i.i.d. with finite second moments by (A1). Hence, by the conditional
central limit theorem,
\[
\frac{S_{xn}}{n}
=
\mu_x(\Theta) + O_p(n^{-1/2}),
\qquad
\frac{S_{yn}}{n}
=
\mu_y(\Theta) + O_p(n^{-1/2}),
\]
where $\mu_x(\Theta)=\mathbb{E}[X_1\mid\Theta]$,
$\mu_y(\Theta)=\mathbb{E}[Y_1\mid\Theta]$.
A standard expansion for ratios of sample means then gives
\[
\frac{S_{xn}}{S_{yn}}
=
\frac{\mu_x(\Theta)}{\mu_y(\Theta)}
+ O_p(n^{-1/2}).
\]
If the directing measure is degenerate or the limits in (A3) are deterministic,
then $\mu_x(\Theta)\equiv\mu_x$ and $\mu_y(\Theta)\equiv\mu_y$, otherwise the
convergence is to the random limit $\mu_x(\Theta)/\mu_y(\Theta)$. In either
case the stochastic fluctuation satisfies
\[
c_n - \frac{\mu_x}{\mu_y}=O_p(n^{-1/2}).
\]

Next, we incorporate the sparse deviation structure (A5). Write
\[
Y_i = c_n X_i + \delta_{i,n}, \qquad
\delta_{i,n}=Y_i - c_n X_i.
\]
Summing over $i\le n$ obtains
\[
S_{yn}
=
c_n S_{xn}
+ \sum_{i\in G_n}\delta_{i,n}
+ \sum_{i\in B_n}\delta_{i,n}.
\]
Rearranging yields the exact identity
\begin{equation}\label{eq:cn_decomp}
c_n
=
\frac{S_{yn}}{S_{xn}}
- \frac{1}{S_{xn}}\sum_{i\in G_n}\delta_{i,n}
- \frac{1}{S_{xn}}\sum_{i\in B_n}\delta_{i,n}.
\end{equation}

On the set $G_n$, Assumption (A5) gives $|\delta_{i,n}|\le\varepsilon_n$, so
\[
\bigg|\frac{1}{S_{xn}}\sum_{i\in G_n}\delta_{i,n}\bigg|
\le
\frac{n\,\varepsilon_n}{S_{xn}}
=
O(\varepsilon_n)
\]
since $S_{xn}/n\to\mu_x>0$ by (A3).

On the sparse set $B_n$, we only use $|B_n|=o(n)$. Using
$S_{xn}=n\mu_x+o_p(n)$ and $|\delta_{i,n}|\le C(|X_i|+|Y_i|)$ with bounded
$p$-moments by (A1), we obtain
\[
\bigg|\frac{1}{S_{xn}}\sum_{i\in B_n}\delta_{i,n}\bigg|
\le
\frac{|B_n|}{S_{xn}}
\max_{i\in B_n}|\delta_{i,n}|
=
O_p\!\left(\frac{|B_n|}{n}\right).
\]

Substituting these two bounds into \eqref{eq:cn_decomp} together with the
root-$n$ expansion for $S_{yn}/S_{xn}$ completes the proof:
\[
c_n - \frac{\mu_x}{\mu_y}
=
O_p(n^{-1/2})
+ O(\varepsilon_n)
+ O_p\!\left(\frac{|B_n|}{n}\right).
\]
\end{proof}

\section{Examples of Asymptotic Equivalence\label{aeexamples}}

\begin{example}\label{ID}
The index of dissimilarity (ID), $\frac{1}{2n}\sum_{i=1}^n |x_i - y_i|$, and its multiple, the Total Absolute Error of Shares (Census, 2023), $\sum_{i=1}^n |x_i - y_i|$,   are asymptotically equivalent to the mean absolute difference,  $\frac{1}{n}\sum_{i=1}^n |X_i - Y_i|$. 
\end{example}
Example \ref{ID} shows a major limitation of the index of dissimilarity: it is asymptotically equivalent to the Total Absolute Difference, a measure with limited usefulness.  In particular, it is completely insensitive to the size of a unit. A large error in a large unit is considered the same as the same error in a smaller unit, even though the latter has a higher relative error.\footnote{It easy to prove the same of the index of dissimilarity. However, it has wider acceptance as an error measure.}
\begin{example}
The level-based Cobb-Douglas total loss function, $\sum_{i=1}^n |X_i-Y_i|^pY_i^q$,   is asymptotically equivalent to the share-based Cobb-Douglas total loss function,  $\sum_{i=1}^n |x_i-y_i|^py_i^q$, where $p > 0$ and $q < 0$.  In particular, the Webster/Saint Lagüe level loss function, $\chi^2 = \sum_{i=1}^n |X_i-Y_i|^2 Y_i^{-1}$, is asymptotically equivalent to  $\sum_{i=1}^n |x_i-y_i|^2 y_i^{-1}$, Pearson's $\chi^2$ divergence. Multiplying the latter by $n$ yields the $\chi^2$ statistic for shares.
\end{example}

\section{Asymptotic Equivalence Does Not Imply Small-Sample Equivalence\label{small_sample}}

This is best demonstrated by example.  Consider the data in Table \ref{tab:small_sample}.  The $x_{1i}$ and $x_{2i}$ are two different realizations of the $y_i$.
\begin{table}[]
\caption{Small Sample Example}
\centering
\label{tab:small_sample}
\begin{tabular}{lrr}
\\
                          & Unit 1 & Unit 2 \\ \hline
$y_i$                        & 10     & 990    \\
Share of Total            & 0.01   & 0.99   \\ \hline
$x_{i1}$                       & 11     & 999    \\
Share of Total            & 0.0109 & 0.9891 \\
Absolute Difference       & 1      & 9      \\
Absolute Share Difference & 0.0009 & 0.0009 \\ \hline
$x_{i2}$                        & 15     & 995    \\
Share of Total            & 0.0149 & 0.9851 \\
Absolute Difference       & 5      & 5      \\
Absolute Share Difference & 0.0049 & 0.0049 \\
\end{tabular}
\end{table}
Table \ref{tab:summary_statistics} shows the total absolute differences and indices of dissimilarity for the two sets of data in Table \ref{tab:small_sample}.  “Set $j$” refers to the $x_{ij}$.  In this example, $x_{11} + x_{21} = x_{12} + x_{22} = 1010$, so no scale effects occur from having different $S_{xn}$.
\begin{table}[]
\caption{Small Sample Summary Statistics}
\centering
\label{tab:summary_statistics}
\begin{tabular}{lrr}
\\
	& Set 1	& Set 2 \\
Total Absolute Difference	& 10	& 10 \\
Index of Dissimilarity	& 0.0004	& 0.0024
\end{tabular}
\end{table}
According to the index of dissimilarity, Set 1 is closer to the $y_i$ than Set 2, while the Total Absolute Difference is the same for both.  This example thus shows that two asymptotically equivalent measures are not necessarily equivalent in small samples.\footnote{Cohen and Zhang (1988) obtain a similar result, also for two units.  They find, unsurprisingly, that the level curves of two functions not related by a monotonic transformation,  are indeed different.}

\section{An Empirical Example\label{empirical}}

This example is based on the data in Davis (1994).  The Census Bureau produced population estimates for 3,141 counties for April 1, 1990.  These are the $X_{i1}$.  The 1990 census actual values are the $Y_i$.  The 1980 census actual values are the $X_{i2}$.  The measures considered are total absolute difference,  index of dissimilarity, $\chi^2$ and Fisher's $\chi^2$ divergence.  The idea is to compare the accuracy of estimating county populations by simply using the 1980 census values against the estimates produced before the 1990 census.  

Table \ref{tab:measures} displays the values of the measures and their ratios.
\begin{table}[]
\caption{Comparison of Accuracy Measures for 1990}
\label{tab:measures}
\centering
\begin{tabular}{lrrr}
\\
Measure                     & 1990 Estimates & 1980 Census & 1990 Estimates/1980 Census \\
Total   Absolute Difference & 5,604,178        & 29,016,212    & 0.1931                                                                             \\
Index of   Dissimilarity    & 0.011          & 0.054       & 0.2024                                                                             \\
$\chi^2$            & 243,515         & 5,808,180     & 0.0419                                                                             \\
Pearson's $\chi^2$ divergence                         & 0.0009         & 0.0186      & 0.0499    
\end{tabular}
\end{table}
The percent difference between their ratio and that of the Total Absolute Difference is about 5\%.  The percentage difference between $\chi^2$  and Pearson's $\chi^2$ divergence  is much greater: about 20\%.  It is not clear why these numbers are so different.  Exchangeability, the more favorable case of Proposition \ref{prop:cn_speed}, only guarantees convergence in proportion to $\sqrt{n}$. It is silent about the level.  Different datasets may produce different results.

\section{Conclusion\label{conclusion}}

We have shown that the weighted exponentiated loss functions are asymptotically equivalent whether levels or shares are used as arguments.  This has the important implication that numerical and distributive accuracy are asymptotically equivalent concepts.  Thus, asymptotically, it makes no difference whether shares or levels are used as the arguments in the loss functions.  In particular, the index of dissimilarity has been shown to be asymptotically equivalent to the total and mean absolute difference, thereby limiting its usefulness.  Similar results can be found for other difference measures.

\section*{References}

Armstrong, J. Scott (1985), Long-Range Forecasting: From Crystal Ball to Computer, New York: Wiley.\newline
\newline
Cohen, Michael L. and Xiao Di Zhang (1988), “Aggregate and Proportional Loss Functions in Adjustment using Artificial Populations,” SRD Research Report CENSUS/SRD/RR-88/11.  \url{http://www.census.gov/srd/papers/pdf/rr88-11.pdf}\newline
\newline
Coleman, Charles D. (2025), "Total Total Loss Functions for Measuring the Accuracy of Nonnegative Cross-Sectional Predictions," \url{https://doi.org/10.48550/arXiv.2507.15136}.\newline
\newline
Davis, Sam T. (1994), “Evaluation of Postcensal County Estimates for the 1980s,” Population Division Working Paper No. 5, U.S. Census Bureau, Washington, DC.  \url{http://www.census.gov/population/www/documentation/twps0005/twps0005.html}\newline
\newline
Kuha, Jouni and David Firth (2011),
"On the index of dissimilarity for lack of fit in loglinear and log-multiplicative models,"
Computational Statistics \& Data Analysis,
55(1),
375-388. \url{https://doi.org/10.1016/j.csda.2010.05.005}\newline
\newline
White, Michael J. (1986), “Segregation and Diversity Measures in Population Distribution,” Population Index, 52, 198-221.

\end{document}